\newtheorem{theorem}{Theorem}
\newtheorem{lemma}[theorem]{Lemma}
\newtheorem{proposition}[theorem]{Proposition}
\newtheorem{corollary}[theorem]{Corollary}
\newtheorem{claim}{Claim}
\theoremstyle{definition}
\newtheorem*{remark}{Remark}
\newtheorem*{question}{Question}
\title
[BMO-Teichm\"uller theory]{A real-variable construction with applications to BMO-Teichm\"uller theory}
\author[H. Wei]{Huaying Wei} 
\address{Department of Mathematics and Statistics, Jiangsu Normal University \endgraf Xuzhou 221116, PR China} 
\email{hywei@jsnu.edu.cn} 
\author[M. Zinsmeister]{Michel Zinsmeister}
\address{Institut Denis Poisson, Universit\'e  d' Orl\'eans, 45067 Orl\'eans Cedex 2, France}
\email{zins@univ-orleans.fr}
\subjclass[2010]{Primary 30C62, 30H35; Secondary 26A46, 37E10}
\keywords{doubling weight, $A_\infty$-weight, BMO space, BMO-Teichm\"uller space, chord-arc curve}
\begin{document}

\maketitle

\begin{abstract}
With the use of real-variable techniques, we construct a weight function $\omega$ on the interval $[0, 2\pi)$  that is doubling and satisfies $\log \omega$ is a BMO function, but which is not a  Muckenhoupt weight ($A_\infty$). Applications to the BMO-Teichm\"uller space and the space of chord-arc curves are considered. 
\end{abstract}

\section{Introduction}
Let $\Gamma$ be a bounded Jordan curve in the extended complex plane $\hat{\mathbb C}$. We can consider three objects associated to $\Gamma$: the Riemann mapping $\Phi$ from the unit disk $\mathbb D$ onto the bounded component $\Omega$ of $\hat{\mathbb C}\backslash\Gamma$; the Riemann mapping $\Psi$ from the exterior of the unit disk $\mathbb D^*$ onto the unbounded component $\Omega^*$ of $\hat{\mathbb C}\backslash\Gamma$; the conformal welding corresponding to $\Gamma$, $h = \Psi^{-1}\circ\Phi$, which is a sense-preserving homeomorphism of the unit circle $\mathbb S$. Let $S(\mathbb D)$ be the set of all mappings $\log f'(z)$ where $f$ is conformal (i.e. holomorphic and injective) in $\mathbb D$. By the Koebe distortion theorem, $S(\mathbb D)$ is a bounded subset of the Bloch space $\mathcal B(\mathbb D)$ which consists of holomorphic functions $\varphi$ in $\mathbb D$ with finite norm $\Vert\varphi\Vert_{\mathcal B} = \sup_{z\in\mathbb D}(1 - |z|^2)|\varphi'(z)|$. In particular $\log\Phi' \in S(\mathbb D)$ and $\log\Psi' \in S(\mathbb D^*)$, which is defined in a similar way. 

A bounded Jordan curve $\Gamma$ is called quasicircle if there exists a constant $C > 0$ such that
$$
 {\text diam}(\gamma) \leq C|z - \zeta|
$$
for any $z, \zeta \in \Gamma$, 
where $\gamma$ is the smaller of the two subarcs of $\Gamma$ joining $z$ and $\zeta$. The quasicircle $\Gamma$ can be characterized from the viewpoint of the universal Teichm\"uller space in the following equivalent ways (see \cite{Ah, Le, Na}): 
\begin{enumerate}
\item[(a)]
$\log\Phi'$ belongs to the interior of $S(\mathbb D)$ in Bloch space $\mathcal B(\mathbb D)$.
\item[(b)]
$\log\Psi'$ belongs to the interior of $S(\mathbb D^*)$ in Bloch space $\mathcal B(\mathbb D^*)$.
\item[(c)]
$h$ is a quasisymmetric homeomorphism of $\mathbb S$, namely,  there exists a constant $C > 0$ such that for any adjacent intervals $I, I^* \subset \mathbb S$ of length $|I| = |I^*| \leq \pi$, we have $C^{-1}|h(I)| \leq  |h(I^*)| \leq C|h(I)|$,  where $|\cdot |$ denotes the Lebesgue measure. The optimal value of such $C$ is called the doubling constant for $h$. 
\end{enumerate}
It is well known that a quasisymmetric homeomorphism need not be absolutely continuous, and may be totally singular.  If however it is absolutely continuous, we say $|h'|$ is a doubling weight.

There are analogs of the above statements in the setting of the BMO Teichm\"uller theory,  introduced by Astala and Zinsmeister \cite{AZ}, and investigated in depth later by Fefferman, Kenig and Pipher \cite{FKP}, Bishop and Jones \cite{BJ}, Cui and Zinsmeister \cite{CZ}, Shen and Wei \cite{SW}.  

Let $\Gamma$ be  a bounded quasicircle. Then the following three statements are equivalent:
\begin{enumerate}
\item
$\log\Phi' \in \rm BMOA(\mathbb D)$, the space of analytic functions in $\mathbb D$ of bounded mean oscillation. 
\item
$\log\Psi' \in \rm BMOA(\mathbb D^*)$, the space of analytic functions in $\mathbb D^*$ of bounded mean oscillation. 
\item
$h$ is a strongly quasisymmetric homeomorphism of $\mathbb S$, namely, for each $\epsilon > 0$ there is a $\delta > 0$ such that 
$$
|E| \leq \delta |I| \Rightarrow |h(E)| \leq \epsilon |h(I)|
$$
whenever $I \subset \mathbb S$ is an interval and $E \subset I$ a measurable subset: we say that $h$ is absolutely continuous and $|h'|$ is an $A_{\infty}$-weight (or Muckenhoupt weight). 
\end{enumerate}
The set of strongly quasisymmetric homeomorphisms of $\mathbb S$ is a group; more precisely, it is the group of homeomorphisms $h$ such that $P_h:\, b\mapsto b\circ h$ is an isomorphism of the BMO space ${\rm BMO}(\mathbb S)$ (see \cite{Jo}).  
Naturally an $A_{\infty}$-weight is  doubling. Fefferman and Muckenhoupt  \cite{FM} gave this a direct computation, and they also provided an example of a function that satisfies the doubling condition but not $A_{\infty}$. 

Noting that $h = \Psi^{-1}\circ\Phi$, we conclude that
$$
\log h' = \log\Phi' - \log\Psi'\circ h.
$$
If one of the above three characterizations is true, then it holds that 
\begin{enumerate}
\item[(4)]
$h$ is absolutely continuous and $\log h' \in \rm BMO(\mathbb S)$.
\end{enumerate}
Recall that an integrable function $u$ on  $\mathbb S$  is said to have bounded mean oscillation, i.e.  $u \in \rm BMO(\mathbb S)$,  if
$$
\Vert u \Vert_* = \sup_{I \subset \mathbb S}\frac{1}{|I|} \int_I |u(z)-u_I| |dz| <\infty,
$$
where the supremum is taken over all bounded intervals $I$ on $\mathbb S$ and $u_I$ denotes the integral mean of $u$
over $I$. This is regarded as a Banach space with norm $\Vert \cdot \Vert_*$  modulo constants since obviously constant functions have norm zero. 
An integrable function $u$ on $\mathbb S$ is said to have vanishing mean oscillation, i.e.  $u \in {\rm VMO}(\mathbb S)$,  if $\Vert u \Vert_* < \infty$ and moreover 
$$ 
\lim_{|I| \to 0}\frac{1}{|I|} \int_I |u(z)-u_I| |dz|=0. 
$$
This is a closed subspace of ${\rm BMO}(\mathbb S)$, actually the closure of the space of all continuous functions on $\mathbb S$ under the norm $\Vert \cdot \Vert_*$.  
If $\log h' \in \rm BMO(\mathbb S)$ with a small norm, or if $\log h' \in \rm VMO(\mathbb S)$, then it can be checked easily that  $|h'|$ is an $A_{\infty}$-weight by the John-Nirenberg inequality (see \cite{Ga})(see also \cite[Proposition 5.4]{WM} for a proof). 

In the present paper, in section 2 we will construct an example of a  weight function with the use of real-variable techniques that shows $(4) \nRightarrow (3)$ in the premise of $\Gamma$ being a bounded quasicircle. 
Besides that, this construction implies more. More precisely, we will prove the followings in sections 3, 4, 5, respectively. 

\begin{theorem}
There exists a sense-preserving homeomorphism $h$ of $\mathbb S$ such that $h$ is absolutely continuous,  $|h'|$ is a doubling weight, $\log h' \in \rm BMO(\mathbb S)$, but $|h'|$ is not an $A_{\infty}$-weight. 
\end{theorem}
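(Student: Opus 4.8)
The plan is to reduce the theorem to the construction of a single scalar weight $\omega$ on the circle and then obtain $h$ by integration. Concretely, once I have a function with $\omega>0$ a.e., $\omega\in L^1$, $\omega$ doubling, $\log\omega\in{\rm BMO}(\mathbb S)$ but $\omega\notin A_\infty$, I would set
\[
 h(\theta)=2\pi\,\frac{\int_0^\theta \omega(t)\,dt}{\int_0^{2\pi}\omega(t)\,dt},\qquad \theta\in[0,2\pi).
\]
Since $\omega>0$ a.e. and $\omega\in L^1$, the map $h$ is a continuous, strictly increasing bijection of $\mathbb S$ fixing $0$, hence a sense-preserving homeomorphism, and it is absolutely continuous with $h'=c\,\omega$ for the positive constant $c=2\pi/\|\omega\|_1$. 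Because $h'$ is a fixed positive multiple of $\omega$, each of the properties ``doubling'', ``$\log h'\in{\rm BMO}$'' and ``$A_\infty$'' holds for $|h'|$ exactly when it holds for $\omega$; thus all four assertions are inherited from the corresponding properties of $\omega$, and everything comes down to producing $\omega$.

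For the weight I would take the explicit logarithmic profile
\[
 \omega(x)=\frac{1}{|x|\,\bigl(\log (e/|x|)\bigr)^2}\quad\text{for }0<|x|\le \tfrac12,
\]
using the circle identification $0\equiv 2\pi$ so that $0$ is an interior singular point, extended to the rest of $[0,2\pi)$ as a smooth function bounded above and below away from $0$. The key elementary fact is the closed form $\omega([0,s])=1/\log(e/s)$, obtained since $\tfrac{d}{dx}\bigl(\log(e/x)\bigr)^{-1}=\omega(x)$; in particular $\omega\in L^1$. The heuristic behind this choice is that $\log\omega$ behaves to leading order like the borderline ${\rm BMO}$ function $\log(1/|x|)$, whose exponential $1/|x|$ just fails to be integrable, and that the factor $(\log(e/|x|))^{-2}$ is the gentlest correction restoring integrability while keeping the singularity precisely on the logarithmic borderline of $A_\infty$.

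I would then verify the three properties of $\omega$ separately. Failure of $A_\infty$ is immediate from the closed form: with $I=[0,\ell]$ and $E=[0,\ell^2]$ one has $|E|/|I|=\ell\to 0$ while
\[
 \frac{\omega(E)}{\omega(I)}=\frac{\log(e/\ell)}{\log(e/\ell^{2})}=\frac{1+\log(1/\ell)}{1+2\log(1/\ell)}\longrightarrow \tfrac12,
\]
so no bound $\omega(E)/\omega(I)\le C(|E|/|I|)^\delta$ can hold. Doubling follows from $\omega([0,2\ell])/\omega([0,\ell])=\log(e/\ell)/\log(e/2\ell)\to 1$ together with the scale structure of $\omega$ away from $0$, giving a uniform doubling constant. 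For the ${\rm BMO}$ bound I would write $\log\omega(x)=\log(1/|x|)-2\log\log(e/|x|)$ and use that $\log(1/|x|)\in{\rm BMO}$, while the correction is $\phi\circ t$ with $t=\log(e/|x|)\in{\rm BMO}$ and $\phi=\log$ Lipschitz on $[1,\infty)$, so $\log\log(e/|x|)\in{\rm BMO}$ by post-composition with a Lipschitz map; hence $\log\omega\in{\rm BMO}$.

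I expect the main obstacle to be the honest global verification of $\log\omega\in{\rm BMO}$ --- controlling the mean oscillation over every interval, including those straddling the cutoff at $|x|=\tfrac12$ and those far from the singularity, and confirming that the $\log\log$ correction contributes a bounded (not merely local) amount. Equally one must check that the norm is genuinely not small and that $\log\omega\notin{\rm VMO}$, since a small ${\rm BMO}$ norm or the ${\rm VMO}$ property would, by the John--Nirenberg argument recalled in the introduction, force $A_\infty$ and contradict the construction; this is guaranteed here because the oscillation of $\log(1/|x|)$ over intervals abutting $0$ stays bounded below. For the present statement this explicit weight suffices; if a more robust object is wanted for the later applications, the same mechanism can be realized by a more hands-on multiscale construction, but the logarithmic borderline above is the essential phenomenon.
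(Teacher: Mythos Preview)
Your reduction to constructing a weight $\omega$ and then integrating is fine, but the proposed weight
\[
\omega(x)=\frac{1}{|x|\bigl(\log(e/|x|)\bigr)^{2}}
\]
is \emph{not} doubling, so the argument breaks at exactly the property you flag as easiest. You only check the ratio $\omega([0,2\ell])/\omega([0,\ell])=\log(e/\ell)/\log(e/2\ell)$, which is indeed bounded, but doubling requires comparability of \emph{all} adjacent pairs of equal length. Take $I^\ast=[0,\ell]$ and $I=[\ell,2\ell]$: using your closed form $\omega([0,s])=1/\log(e/s)$ one gets, with $u=\log(e/\ell)$,
\[
\omega(I^\ast)=\frac{1}{u},\qquad \omega(I)=\frac{1}{u-\log 2}-\frac{1}{u}=\frac{\log 2}{u(u-\log 2)},
\]
so $\omega(I^\ast)/\omega(I)=(u-\log 2)/\log 2\to\infty$ as $\ell\to 0$. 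Equivalently, the very computation you use to defeat $A_\infty$ --- that $\omega([0,\ell^{2}])/\omega([0,\ell])\to\tfrac12$ --- already contradicts doubling, since doubling would force $\omega([0,\ell^{2}])\le C^{-k}\omega([0,\ell])$ with $k\asymp\log(1/\ell)\to\infty$. A single--point singularity strong enough to kill $A_\infty$ inevitably concentrates too much mass near that point to be doubling.

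This is precisely why the paper's construction is multiscale rather than pointwise. There one takes $\omega=M\tilde f$ with $\tilde f$ a carefully weighted series of Riesz products $P_{N_n}(t)=\prod_{j\le N_n}(1+\epsilon\cos 3^{j}t)$: the Coifman--Rochberg theorem gives $\log\omega\in\mathrm{BMO}$ for free, $A_\infty$ fails because $\tilde f\notin L^{p}$ for any $p>1$ (so the reverse H\"older inequality cannot hold), and doubling comes from the approximate $2\pi/3^{n}$--periodicity of the Riesz products at every scale, yielding $\omega(x+2\pi/3^{n})\asymp\omega(x)$ uniformly in $n$. The failure of $A_\infty$ is thus distributed across infinitely many scales instead of localized at one point, which is exactly what allows it to coexist with doubling. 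Your closing remark that ``the same mechanism can be realized by a more hands-on multiscale construction'' is on target, but it is not a cosmetic upgrade --- it is the heart of the matter, and the single--logarithm example cannot serve even as a warm--up for the doubling claim.
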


\begin{corollary}
There exist a sequence $\{\Gamma_t\}$ $(0  \leq t \leq 1)$ of quasicircles and a constant $C > 0$ such that $\Vert \log h_t' \Vert_* \leq C$ for any $0 \leq t \leq 1$, $\log \Phi_t' \in \rm BMOA(\mathbb D)$ for any $0 \leq t < 1$,  but $\Vert \log\Phi_t'\Vert_* \to \infty$ as $t \to 1$. 
\end{corollary}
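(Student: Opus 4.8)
The plan is to manufacture the family $\{\Gamma_t\}$ by \emph{regularizing} the single welding $h$ produced in the Theorem, whose weight $|h'|$ is doubling with $\log h'\in\mathrm{BMO}(\mathbb S)$ but fails to be an $A_\infty$-weight. Since $|h'|$ is doubling, $h$ is quasisymmetric, so by characterization (c) the associated curve $\Gamma$ is a bounded quasicircle; the failure of $|h'|\in A_\infty$ is exactly the failure of (3), and by the equivalence (1)$\Leftrightarrow$(3) recalled in the Introduction this means $\log\Phi'\notin\mathrm{BMOA}(\mathbb D)$. I would set $\Gamma_1:=\Gamma$, $h_1:=h$, and look for curves $\Gamma_t$ with $t<1$ that satisfy (1) but ``degenerate'' to $\Gamma$ as $t\to1$.

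For the $h_t$ I would truncate the defining data of $h$. The cleanest device is to cap the logarithm of the weight: writing $u=\log h'\in\mathrm{BMO}(\mathbb S)$ and choosing $M_t\uparrow\infty$ as $t\uparrow1$, put $u_t:=\max(\min(u,M_t),-M_t)$ and let $h_t$ be the normalized homeomorphism with $\log h_t'=u_t+\mathrm{const}$. Because $u_t\in L^\infty$, the weight $|h_t'|$ is bounded above and below, hence trivially an $A_\infty$-weight, and $h_t$ is bi-Lipschitz; thus $\Gamma_t$ is a bounded quasicircle and (3) holds, so by the equivalence $\log\Phi_t'\in\mathrm{BMOA}(\mathbb D)$ for every $t<1$. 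The uniform bound in the statement is then immediate: truncation is a $1$-Lipschitz map of $\mathbb R$, and for any $1$-Lipschitz $\phi$ the double-integral estimate gives $\Vert\phi\circ u\Vert_*\le 2\Vert u\Vert_*$, whence $\Vert\log h_t'\Vert_*=\Vert u_t\Vert_*\le 2\Vert u\Vert_*=:C$ for all $t$. Equivalently one may truncate the real-variable construction of Section 2 at a finite number $N(t)$ of scales, with the uniform BMO bound then coming from the uniform control of the partial construction.

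It remains to prove the blow-up $\Vert\log\Phi_t'\Vert_*\to\infty$, which I would do by contradiction. As $M_t\to\infty$ one has $u_t\to u$ a.e.\ and, after normalization, $h_t\to h$ uniformly on $\mathbb S$; by stability of conformal welding the Riemann maps converge, $\Phi_t\to\Phi$ locally uniformly on $\mathbb D$, so $\log\Phi_t'\to\log\Phi'$ locally uniformly. If $\Vert\log\Phi_t'\Vert_*$ did not tend to infinity, some subsequence would be bounded, and the lower semicontinuity of the $\mathrm{BMOA}$ norm under local uniform convergence (via its Carleson-measure characterization together with Fatou's lemma) would give $\log\Phi'\in\mathrm{BMOA}(\mathbb D)$, i.e.\ statement (1) for $\Gamma=\Gamma_1$ --- contradicting $\log\Phi'\notin\mathrm{BMOA}(\mathbb D)$. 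Hence $\limsup_{t\to1}\Vert\log\Phi_t'\Vert_*=\infty$; passing to a sequence $t_n\uparrow1$ along which the norms increase to $\infty$ and declaring $\Gamma_t:=\Gamma_{t_n}$ on $[t_n,t_{n+1})$ produces a family over $[0,1]$ with all the asserted properties.

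The main obstacle is to secure (ii) and (iii) simultaneously: one needs a regularization that both keeps $\Vert\log h_t'\Vert_*$ uniformly bounded and provides enough convergence of $\Gamma_t$ to $\Gamma$ to force the blow-up. The value-capping above makes the uniform BMO bound transparent, but trades it against the delicate point of welding stability, since the $h_t$ are \emph{not} uniformly quasisymmetric (their doubling constants blow up with $M_t$); securing $\Phi_t\to\Phi$ in this non-uniform regime, or else replacing the soft compactness step by a direct lower estimate of $\Vert\log\Phi_t'\Vert_*$ read off from the Section 2 construction, is where the real work lies. A robust alternative that avoids the non-uniformity is to regularize through a quasiconformal extension of $h$, truncating its Beltrami coefficient to $\{|z|\le 1-1/M_t\}$: this keeps all $\Gamma_t$ uniformly $K$-quasicircles (indeed analytic curves, so (1) holds a fortiori) and makes $\Phi_t\to\Phi$ standard, at the cost of having to establish the uniform bound $\Vert\log h_t'\Vert_*\le C$ directly from the construction.
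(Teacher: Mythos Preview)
Your overall architecture---regularize the Theorem~1 welding to get $A_\infty$ weldings for $t<1$, keep a uniform BMO bound on $\log h_t'$, then argue by compactness that $\Vert\log\Phi_t'\Vert_*$ must blow up---matches the paper's. The gap is precisely the one you flag yourself: neither of your two regularizations closes. With value-capping you lose uniform quasisymmetry of the $h_t$, so there is no mechanism forcing $\log\Phi_t'\to\log\Phi_1'$ in any sense strong enough to run the compactness step; with Beltrami truncation you recover uniform quasisymmetry but have no argument for $\sup_t\Vert\log h_t'\Vert_*<\infty$. As written, the proposal is a plan with an unresolved dichotomy, not a proof.

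The paper resolves both issues at once by a different choice of regularization that you did not consider: it takes $|h_t'|=\omega^t$, i.e.\ simply raises the maximal-function weight $\omega=M\tilde f$ of Section~2 to the power~$t$. Three things then come for free. First, $\Vert\log|h_t'|\Vert_*=t\,\Vert\log\omega\Vert_*\le\Vert\log\omega\Vert_*$, so the uniform BMO bound is a one-line computation. Second, by the Coifman--Rochberg theorem, $(M\nu)^t$ is an $A_1$-weight (hence $A_\infty$) for every $0\le t<1$ whenever $M\nu<\infty$ a.e.; this is exactly what yields $\log\Phi_t'\in\mathrm{BMOA}(\mathbb D)$ for $t<1$ via the equivalence $(1)\Leftrightarrow(3)$. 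Third, the Remark following Theorem~1 shows that $\omega^t$ is doubling with constant $\exp(\pi\epsilon t/(1-\epsilon))$, so the $h_t$ are \emph{uniformly} quasisymmetric; this is what legitimizes the convergence $\log\Phi_t'\to\log\Phi_1'$ in the Bloch norm (i.e.\ in the universal Teichm\"uller space). The blow-up is then finished, as you suggest, by a compactness argument---the paper uses that $\mathrm{BMOA}=(H^1)^*$ and extracts a weak-star limit rather than invoking lower semicontinuity of Carleson measures, but that is a cosmetic difference.

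In short: the missing idea is the specific family $\omega^t$ together with Coifman--Rochberg; once you have it, all three desiderata (uniform BMO, $A_\infty$ for $t<1$, uniform quasisymmetry) hold simultaneously and your compactness endgame goes through.
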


\begin{theorem} 
For any $\epsilon > 0$ there exists a rectifiable quasicircle $\hat\Gamma$ for which $\log \hat\Phi'\in  \mathrm{BMOA}(\mathbb D) $ with $\Vert \log\hat\Phi'\Vert_{\mathcal B}<\epsilon$  such  that $\hat\Gamma$ is not a chord-arc curve. 
\end{theorem}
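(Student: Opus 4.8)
The plan is to realize $\hat\Gamma$ as the image of the unit circle under a conformal map $\hat\Phi$ whose logarithmic derivative I prescribe directly, using (a tuned version of) the weight $\omega$ from Section~2. The guiding principle is the classical fact that a rectifiable quasicircle can be chord-arc only if its arc-length density $|\Phi'|$ is an $A_\infty$-weight on $\mathbb S$ (for a chord-arc curve harmonic measure and arc length are mutually $A_\infty$, and $d\theta$-a.e.\ one has $d\sigma = |\Phi'|\,d\theta$). Thus it suffices to build a conformal $\hat\Phi$ for which $\log\hat\Phi'\in\mathrm{BMOA}(\mathbb D)$ has Bloch norm below $\epsilon$ and $\hat\Gamma$ is rectifiable, yet the boundary modulus $|\hat\Phi'|$ reproduces a weight that is not $A_\infty$. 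The whole point is that the two conditions ``$\log|\hat\Phi'|\in\mathrm{BMO}$'' and ``$|\hat\Phi'|\in A_\infty$'' are genuinely distinct --- precisely the gap exhibited in Section~2 --- so membership in the BMO-Teichm\"uller space does not force the chord-arc property.

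Concretely, I would take a doubling weight $\omega$ with $\log\omega\in\mathrm{BMO}(\mathbb S)$ and $\omega\notin A_\infty$, produced by the Section~2 construction but tuned so that the Poisson extension $U$ of $\log\omega$ satisfies $\sup_{z\in\mathbb D}(1-|z|^2)|\nabla U(z)|<\epsilon$. Let $\tilde U$ be a harmonic conjugate, set $g=U+i\tilde U$ and $\hat\Phi(z)=\int_0^z e^{g(\zeta)}\,d\zeta$, so that $g=\log\hat\Phi'$ and, by the Cauchy--Riemann equations, $\Vert\log\hat\Phi'\Vert_{\mathcal B}=\sup_{z}(1-|z|^2)|\nabla U(z)|<\epsilon$. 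Since $g$ has real part in $\mathrm{BMO}$, $g\in\mathrm{BMOA}(\mathbb D)$. Because $\epsilon<1$, Becker's univalence criterion (via $\sup_z(1-|z|^2)|z\hat\Phi''/\hat\Phi'|\le\Vert\log\hat\Phi'\Vert_{\mathcal B}$) shows $\hat\Phi$ is univalent with a quasiconformal extension, so $\hat\Gamma=\hat\Phi(\mathbb S)$ is a quasicircle; rectifiability then follows from the doubling (hence integrable) property of $\omega$, since the length of $\hat\Gamma$ equals $\int_{\mathbb S}|\hat\Phi'|=\int_{\mathbb S}\omega<\infty$. Finally the nontangential boundary values of $|\hat\Phi'|$ equal $\omega$ a.e., which is not $A_\infty$, whence by the necessary condition above $\hat\Gamma$ is not chord-arc.

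The main obstacle is the tuning step: reconciling the smallness of the Bloch norm with the failure of $A_\infty$. Failure of $A_\infty$ forces the $\mathrm{BMO}$ norm of $\log\omega$ to stay bounded below (by John--Nirenberg a small $\mathrm{BMO}$ norm would yield a reverse H\"older inequality and hence $A_\infty$), so one cannot simply rescale $\log\omega$; the non-$A_\infty$ behaviour must instead be generated by cumulative oscillation across infinitely many scales while each scale contributes at most $\epsilon$ to $\sup_z(1-|z|^2)|\nabla U|$. This is where the flexibility of the real-variable construction is essential: I would re-run it on a lacunary sequence of generations $n_1<n_2<\cdots$ sparse enough that, at any $z$ with $1-|z|\approx 2^{-n}$, only the generation of scale $\approx n$ contributes appreciably to $(1-|z|^2)|\nabla U(z)|$ --- so the supremum does not accumulate and is controlled by the common per-generation amplitude $<\epsilon$ --- yet arranged so that the weight still violates every reverse H\"older inequality and hence fails $A_\infty$. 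The remaining verifications (that prescribing the boundary modulus as $\omega$ does not spoil rectifiability or the quasicircle property) are handled, respectively, by the integrability of $\omega$ and by the smallness of the Bloch norm, so the only delicate work is the scale-by-scale bookkeeping in this tuning.
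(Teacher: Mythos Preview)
Your plan is essentially the paper's: set $\log\hat\Phi'$ to be the analytic completion of the Poisson integral of $\log\omega$ with $\omega$ from Section~2, get $\mathrm{BMOA}$ for free, use a small Bloch norm for the quasicircle (the paper cites Ahlfors--Weill where you cite Becker), $\hat\Phi'\in H^1$ for rectifiability, and $|\hat\Phi'|_{\mathbb S}=\omega\notin A_\infty$ to deny chord-arc. The only divergence is in the tuning for $\|\log\hat\Phi'\|_{\mathcal B}<\epsilon$, and there you work harder than necessary. The Section~2 construction already carries the free Riesz amplitude $\epsilon\in(0,1)$, and for \emph{every} such choice the resulting $\omega$ fails $A_\infty$ (the singularity of the limiting Riesz product and the rest of the argument need only $0<\epsilon<1$); meanwhile the pointwise translation bound proved there, $|\log\omega(x+2\pi/3^n)-\log\omega(x)|\le\pi\epsilon/(1-\epsilon)$ uniformly in $n$, forces the averages of $\log\omega$ over adjacent and over nested triadic intervals to differ by $O(\epsilon)$, which controls $(1-|z|^2)|\nabla U(z)|$ and hence the Bloch norm by a constant multiple of the Riesz amplitude. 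So no sparsification is needed: taking the Riesz amplitude small already yields small Bloch norm together with $\omega\notin A_\infty$. Your worry that a small $\mathrm{BMO}$ norm would force $A_\infty$ is correct but beside the point, since it is the Bloch norm (not the $\mathrm{BMO}$ norm) that is being made small, and the inclusion $\mathrm{BMOA}\subset\mathcal B$ gives the inequality only in the harmless direction. One minor patch: rectifiability requires $\hat\Phi'\in H^1$, not merely that its nontangential boundary modulus $\omega$ lies in $L^1(\mathbb S)$; the paper closes this with Jensen's inequality, $|\hat\Phi'(z)|=e^{U(z)}\le P_r\ast\omega$, whence $\sup_r\|\hat\Phi'(r\,\cdot)\|_1\le\|\omega\|_1$.
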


\section{A ``real-variable" construction}
 For $0 < \epsilon < 1$, set
$$
P_n(t) = \prod_{j=0}^{n} (1 + \epsilon \cos (3^j t)), \qquad t \in [0, 2\pi). 
$$
This is a trigonometric polynomial, and it is known that $P_n(t) \geq0$ and $\int_0^{2\pi}P_n(t) dt = 2\pi$.  Set $\mu_n(x) = \int_0^x P_n(t) dt$ so that $d \mu_n(x) = P_n(x) dx$.  Here, $P_n(x)$ and $\mu_n(x)$ can also be defined for $x \in \mathbb R$ by the periodic extension 
$P_n(x + 2\pi) = P_n(x)$ and by the condition $\mu_n(x + 2\pi) - \mu_n(x) = 2\pi$.  The sequence $\{\mu_n\}$ converges to a non-decreasing limit function $\mu$, 
singular with respect to the Lebesgue measure on $[0, 2\pi)$ (see \cite[Vol. I, Theorem 7.6]{Zy}), which implies that the corresponding Lebesgue-Stieltjes measure sequence 
$\{d \mu_n\}$ converges weakly to a measure $d\mu$ of total mass $2\pi$. A direct proof of this fact was given in \cite[P. 125]{Zi}. 
We recall that this means convergence in the following sense:
\begin{equation}\label{weak}
\int_0^{2\pi} \varphi (x) d \mu_n(x) \to \int_0^{2\pi} \varphi (x) d \mu(x), \qquad n \to \infty
\end{equation}
for any $\varphi \in C_b([0, 2\pi))$, the set of continuous and bounded functions on $[0, 2\pi)$.  The Riesz product
\begin{equation*}
P(t) = \prod_{j=0}^{\infty} (1 + \epsilon \cos (3^j t)), \qquad t \in [0, 2\pi)
\end{equation*}
may then be expanded as a well-defined trigonometric series, which is the Fourier series of the measure $d\mu$ (also called the Fourier-Stieltjes series of the function $\mu$ in the literature). 

\begin{claim}\label{infty}
For any $p > 1$, it holds that 
$$
\lim_{n \to \infty} \Vert P_n \Vert_{p} = \infty. 
$$
\end{claim}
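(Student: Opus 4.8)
The plan is to argue by contradiction, exploiting the tension between two facts that are already on the table: since $P_n\ge 0$ and $\int_0^{2\pi}P_n\,dx=2\pi$, the sequence is bounded in $L^1$ with $\Vert P_n\Vert_1=2\pi$; yet the weak limit $d\mu$ of $\{d\mu_n\}$ is \emph{singular} with respect to Lebesgue measure. The mechanism I would use is reflexivity: for $p>1$ the space $L^p([0,2\pi))$ is reflexive, so any $L^p$-bounded sequence is weakly sequentially precompact, whereas a (nonzero) singular measure can never be the weak-$*$ limit of densities that stay bounded in some $L^p$ with $p>1$. Reconciling these two statements forces the contradiction.

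Concretely, suppose the conclusion fails, so that $\Vert P_n\Vert_p\not\to\infty$. Then there are $M<\infty$ and a subsequence $\{P_{n_k}\}$ with $\Vert P_{n_k}\Vert_p\le M$. By reflexivity of $L^p([0,2\pi))$ for $1<p<\infty$ I would pass to a further subsequence (not relabelled) converging weakly in $L^p$ to some $g\in L^p([0,2\pi))$; testing against any $\varphi\in C_b([0,2\pi))$, which lies in $L^{p'}([0,2\pi))$ for the conjugate exponent $p'$, gives $\int_0^{2\pi}\varphi P_{n_k}\,dx\to\int_0^{2\pi}\varphi g\,dx$. On the other hand $\int_0^{2\pi}\varphi P_{n_k}\,dx=\int_0^{2\pi}\varphi\,d\mu_{n_k}$, and \eqref{weak} gives $\int_0^{2\pi}\varphi\,d\mu_{n_k}\to\int_0^{2\pi}\varphi\,d\mu$. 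Comparing the two limits yields $\int_0^{2\pi}\varphi\,d\mu=\int_0^{2\pi}\varphi g\,dx$ for every $\varphi\in C_b([0,2\pi))$, whence $d\mu=g\,dx$ with $g\in L^p\subset L^1$. Thus $\mu$ would be absolutely continuous, contradicting its singularity. Since no bounded subsequence can exist, $\Vert P_n\Vert_p\to\infty$.

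The step I expect to be the most delicate is the reconciliation of the two modes of convergence: weak $L^p$ convergence is tested against $L^{p'}$ functions while the weak-$*$ convergence \eqref{weak} is tested against $C_b([0,2\pi))$, and the argument rests on the fact that continuous functions lie in both test classes and already determine a finite Borel measure on the circle, so that the identification $d\mu=g\,dx$ is genuinely forced. The hypothesis $p>1$ is essential and enters precisely through reflexivity: at $p=1$ the sequence stays $L^1$-bounded yet converges weakly only to the singular measure $d\mu$, not to an $L^1$ density, so no contradiction is available. As a sanity check one can also compute directly, using the lacunarity of the frequencies $3^j$ (the frequencies $\sum_j a_j 3^j$ with $a_j\in\{-2,\dots,2\}$ arising in $P_n^2$ vanish only when all $a_j=0$), that $\Vert P_n\Vert_2^2=2\pi(1+\epsilon^2/2)^{n+1}\to\infty$; by monotonicity of $L^p$ norms in the normalized measure this already settles $p\ge 2$, but the range $1<p<2$ seems to require the soft compactness argument above rather than interpolation against the crude bound $\Vert P_n\Vert_\infty\le(1+\epsilon)^{n+1}$.
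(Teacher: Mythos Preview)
Your argument is correct and is essentially the paper's own proof: assume a bounded subsequence in $L^p$, extract a weak (equivalently weak-$*$) limit $g\in L^p$, test against continuous functions to identify $d\mu=g\,dx$, and contradict the singularity of $d\mu$. The only cosmetic difference is that the paper invokes Banach--Alaoglu (viewing $L^p$ as the dual of $L^{p'}$) where you invoke reflexivity, but for $1<p<\infty$ these give the same subsequential compactness.
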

\begin{proof}
 Assuming that there exists a positive constant $C$ and a subsequence $\{n_k\}$ such that 
$$
\Vert P_{n_k} \Vert_{p} \leq C
$$
for any $k$, then,  by the Banach-Alaoglu theorem, the sequence $\{P_{n_k}\}$ has a weak-star convergent subsequence $\{P_{n'_k}\}$, converging to some $\hat{P} \in L^p([0, 2\pi))$, namely, 
\begin{equation}\label{q}
\int_0^{2\pi} \varphi (x) P_{n_k'}(x) dx \to \int_0^{2\pi} \varphi (x) \hat{P}(x) dx, \qquad k \to \infty
\end{equation}
for any $\varphi \in L^q([0, 2\pi))$. In particular, taking $\varphi \in C_b([0, 2\pi))$, we conclude by (\ref{weak}) and (\ref{q}) that $d\mu(x) = \hat P(x) dx$, which contradicts the fact that $d\mu$ is singular with respect to the Lebesgue measure on $[0, 2\pi)$. 
\end{proof}

Set $p_n = 1+ 1/n$ so that $p_n \to 1$ as $n \to \infty$.  By Claim \ref{infty}, for any $n \geq 1$ there exists an integer $N_n$ such that  
\begin{equation}\label{4}
\Vert P_{N_n} \Vert_{p_n} \geq 4^n. 
\end{equation}
Set
$$
\tilde f(x) = \sum_{n \geq 1} 2^{-n} \frac{P_{N_n}}{\Vert P_{N_n} \Vert_{L\log L}}, \quad x \in [0, 2\pi).
$$
Here, $\Vert g \Vert_{L\log L} = \int_{I} |g(x)|\log(e+|g(x)|) dx$ for any  integrable function $g$ on the bounded interval $I$, and we say that $g \in L\log L(I)$ if $\Vert g \Vert_{L\log L} < \infty$. 
Set
$$
\omega(x) = M\tilde f (x), \qquad x \in [0, 2\pi).
$$
Here, $M\tilde f$ is the Hardy-Littlewood maximal function of $\tilde f$.  For any $t \in [0, 1]$, set $h_t(e^{ix}) = e^{i g_t(x)}$ by
$$
g_t(x) = \int_0^x \omega^t(s) ds, \quad x \in [0, 2\pi).
$$
Then, $|h_t'(e^{ix})| = \omega^t(x)$ and 
$$
\log h_t'(e^{ix}) = \log g_t'(x) + i(g_t(x) - x)
$$
whose imaginary part is clearly a continuous function, and in particular a BMO function on the interval $[0, 2\pi)$.
Let $\Gamma_t$ be the bounded Jordan curve whose conformal welding is $h_t$, and $\Phi_t$, $\Psi_t$ the Riemann mappings associated to the two components $\Omega_t$ and $\Omega^*_t$ of $\hat{\mathbb C}\backslash\Gamma_t$, respectively. We will denote $h_1$ by $h$ for simplicity in the following. 
  
\section{Proof of Theorem 1}
In this section, we prove the function $h$ above is a desired one for Theorem 1. Before that, 
We need to show that $\tilde f \notin L^p([0, 2\pi))$ for any $p > 1$ (see Claim \ref{notin}), but $\tilde f \in L \log L([0, 2\pi))$ (see Claim \ref{LlogL}). For this purpose we first  recall some well-known facts. 

Let $f$ be a measurable function on a measure space $(X, \nu)$. The 
distribution function
$$
m(t) = \nu(\{x \in X:\; |f(x)| > t\})
$$
defined for $t > 0$  is a decreasing function of $t$, and it determines the $L^p$ norms of $f$. If $f \in L^{\infty}$ then $\Vert f \Vert_{\infty} = \sup \{t: \; m(t) > 0\}$, and if $f \in L^p$ $(0 < p < \infty)$  then the Chebychev inequality says
\begin{equation}\label{Che}
m(t) \leq \frac{1}{t^p} \int_{|f| \geq t} |f|^p d\nu, 
\end{equation}
and in particular $m(t) \leq \Vert f \Vert_{p}^p/t^p$. 

\begin{lemma}\label{Fubini}
Let $\psi : [0, \infty) \to [0, \infty)$ be an increasing differentiable function such that $\psi (0) = 0$. If $f(x)$ is a non-negative measurable function in a measure space $(X, \nu)$, then
\begin{equation}\label{Fu}
\int_X \psi(f(x)) d\nu = \int_0^{\infty} \psi'(x) m(t) dt. 
\end{equation}
\end{lemma}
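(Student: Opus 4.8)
The plan is to rewrite $\psi(f(x))$ as an integral of $\psi'$ and then interchange the order of integration via Tonelli's theorem. Since $\psi$ is differentiable with $\psi(0) = 0$, the fundamental theorem of calculus gives, for each fixed $x \in X$,
$$
\psi(f(x)) = \int_0^{f(x)} \psi'(t)\, dt = \int_0^{\infty} \psi'(t)\, \chi_{\{t < f(x)\}}\, dt,
$$
where $\chi$ denotes the indicator function and we have used that $f(x) \geq 0$ so that the upper limit $f(x)$ is exactly the threshold below which the indicator is $1$.

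First I would integrate this pointwise identity over $X$ against $\nu$, producing a double integral
$$
\int_X \psi(f(x))\, d\nu = \int_X \left( \int_0^{\infty} \psi'(t)\, \chi_{\{t < f(x)\}}\, dt \right) d\nu.
$$
The next step is to swap the two integrals. The integrand $(x, t) \mapsto \psi'(t)\, \chi_{\{t < f(x)\}}$ is non-negative, because $\psi$ is increasing and hence $\psi' \geq 0$, and it is jointly measurable on $X \times [0, \infty)$, since the set $\{(x, t) : t < f(x)\}$ is measurable whenever $f$ is measurable and $\psi'$ is a measurable function of $t$. Invoking Tonelli's theorem, I obtain
$$
\int_X \psi(f(x))\, d\nu = \int_0^{\infty} \psi'(t) \left( \int_X \chi_{\{t < f(x)\}}\, d\nu \right) dt = \int_0^{\infty} \psi'(t)\, \nu(\{x : f(x) > t\})\, dt.
$$
Because $f \geq 0$, the inner integral equals the distribution function $m(t) = \nu(\{x : |f(x)| > t\})$, and the right-hand side becomes $\int_0^{\infty} \psi'(t)\, m(t)\, dt$, which is exactly (\ref{Fu}).

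The computation is essentially routine, so the only point genuinely demanding care — the ``hard part'' of an otherwise soft argument — is the justification of the interchange of integrals. This reduces to checking the two hypotheses of Tonelli's theorem: the joint measurability of the indicator on the product space and the non-negativity of the integrand, both of which follow directly from the measurability of $f$ and the monotonicity of $\psi$. It is worth emphasizing that no integrability hypothesis on $f$ nor any finiteness of the integrals is required, since Tonelli's theorem applies to arbitrary non-negative measurable functions and both sides of (\ref{Fu}) are permitted to take the value $+\infty$.
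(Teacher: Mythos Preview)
Your argument is correct and follows essentially the same route as the paper: write $\psi(f(x))=\int_0^{f(x)}\psi'(t)\,dt$ and interchange the order of integration by Tonelli/Fubini. The only nuance the paper adds is a one-line reduction to the case where $\{f>0\}$ has $\sigma$-finite measure (otherwise both sides are $+\infty$), which is the hypothesis under which Tonelli is stated; you implicitly assume this but do not say it.
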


\begin{proof}
We may assume $f$ vanishes except on a set of $\sigma$-finite measure because otherwise both sides of (\ref{Fu}) are infinite. Then the Fubini theorem shows that both sides of (\ref{Fu}) equal the product measure of the ordinate set $\{(x, t): \; 0 < t < \psi(f(x))\}$. That is, 
\begin{equation*}
\begin{split}
\int_X\psi(f(x)) d\nu & = \int_X\int_0^{f(x)} \psi'(t) dt d\nu = \int_0^{\infty} \psi'(t) \nu(\{f > t\}) dt\\
& = \int_0^{\infty} \psi'(t) m(t) dt.
\end{split}
\end{equation*}
\end{proof}

\begin{proposition}\label{p-1}
Let $f$ be a non-negative measurable function in a measure space $(X, \nu)$ with norm $\Vert f \Vert_1 = 1$. If  $\Vert f \Vert_p \geq 2$ for some $p > 1$, then there is a positive constant $C$ depending only on $p$ such that 
$$
\int_X f(x) \log (e + f(x)) d\nu \leq \frac{C}{(p - 1)^2} \log \Vert f \Vert_p.
$$
\end{proposition}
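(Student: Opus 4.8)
The plan is to pass to the distribution function $m(t)=\nu(\{f>t\})$ and apply Lemma \ref{Fubini} to $\psi(s)=s\log(e+s)$, which is increasing, differentiable, and vanishes at $0$. Since $\psi'(s)=\log(e+s)+\frac{s}{e+s}\le \log(e+s)+1\le 2\log(e+s)$ (using $\log(e+s)\ge 1$), the lemma gives
$$
\int_X f\log(e+f)\,d\nu=\int_0^\infty \psi'(t)\,m(t)\,dt\le 2\int_0^\infty \log(e+t)\,m(t)\,dt.
$$
Writing $A=\Vert f\Vert_p\ge 2$, the two facts I have about $m$ are the Chebyshev estimates from (\ref{Che}): the $L^1$ information $\int_0^\infty m(t)\,dt=\Vert f\Vert_1=1$, which is efficient for small $t$, and the $L^p$ information $m(t)\le A^p/t^p$, which is efficient for large $t$. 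I would therefore split the integral at the threshold $T=A^{p/(p-1)}$, chosen so that the two bounds are comparable.

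On the low range $0\le t\le T$ I would estimate $\log(e+t)\le\log(e+T)$ and use $\int_0^T m(t)\,dt\le\int_0^\infty m(t)\,dt=1$, giving a contribution of order $\log(e+T)$; since $\log T=\tfrac{p}{p-1}\log A$, this is controlled by $\tfrac{1}{p-1}\log A$. On the high range $t\ge T$ I would insert $m(t)\le A^p/t^p$, reducing the problem to estimating $\int_T^\infty \frac{\log(e+t)}{t^p}\,dt$. An integration by parts (after the elementary bound $\log(e+t)\le 2\log t$, valid once $t$ exceeds a fixed size) yields
$$
\int_T^\infty \frac{\log t}{t^p}\,dt=\frac{T^{1-p}}{p-1}\left(\log T+\frac{1}{p-1}\right).
$$
The decisive point of the choice $T=A^{p/(p-1)}$ is that it makes the prefactor $A^{p}T^{1-p}$ equal to $1$, so the high-range contribution reduces to $\frac{C}{p-1}\bigl(\frac{p}{p-1}\log A+\frac{1}{p-1}\bigr)$, which is of order $\frac{1}{(p-1)^2}\log A$ after using $\log A\ge\log 2$ to absorb the additive constant.

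Adding the two ranges gives the asserted bound $\frac{C}{(p-1)^2}\log A$. The two factors of $(p-1)^{-1}$ have a transparent origin: one arises from the integration $\int_T^\infty t^{-p}\,dt=\frac{T^{1-p}}{p-1}$, and the second from the fact that the natural threshold is the large power $T=A^{p/(p-1)}$, so $\log T$ itself carries a factor $(p-1)^{-1}$. I expect the only genuinely delicate part to be the balancing of the two ranges through the correct choice of $T$ together with the careful bookkeeping of the elementary inequalities $\psi'(t)\le 2\log(e+t)$ and $\log(e+t)\le 2\log t$; everything else is a matter of absorbing bounded quantities into the constant. Since $A\ge 2$ every $O(1)$ term is dominated by $\frac{\log A}{(p-1)^2}$, and any residual harmless dependence on $p$ (such as the factor $\frac{p}{p-1}$) may be swallowed into $C$, which is permitted to depend on $p$.
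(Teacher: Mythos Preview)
Your proposal is correct and follows essentially the same route as the paper: apply Lemma~\ref{Fubini} with $\psi(t)=t\log(e+t)$, bound $\psi'(t)\le 2\log(e+t)$, split $\int_0^\infty \log(e+t)\,m(t)\,dt$ at $T=\Vert f\Vert_p^{p/(p-1)}$, use $\Vert f\Vert_1=1$ on $[0,T]$ and Chebyshev on $[T,\infty)$, and absorb the residual terms using $\Vert f\Vert_p\ge 2$. The only cosmetic difference is that you compute $\int_T^\infty t^{-p}\log t\,dt$ via integration by parts after replacing $\log(e+t)$ by $C\log t$, whereas the paper records directly the estimate $\int_T^\infty \tfrac{\log(e+t)}{t^p}\,dt<\tfrac{(p-1)\log(e+T)+1}{(p-1)^2 T^{p-1}}$; both yield the same bound.
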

\begin{proof}
By Lemma \ref{Fubini} and taking $\psi (t) = t\log (e+ t)$ we have
\begin{equation}\label{applyF}
\int_X f(x) \log(e + f(x)) d\nu = \int_0^{\infty} \psi'(t) m(t) dt < 2 \int_0^{\infty} \log (e + t) m(t) dt.
\end{equation}
Let now $T > 0$ to be determined later. We divide the right integral of the above inequality by $T$ into two parts and then estimate them respectively:
$$
\int_0^{\infty} \log (e + t) m(t) dt = \int_0^{T} \log (e + t) m(t) dt + \int_T^{\infty} \log (e + t) m(t) dt.
$$
By Lemma \ref{Fubini} and $\Vert f \Vert_1 = 1$ we have
\begin{equation*}
\begin{split}
 \int_0^{T} \log (e + t) m(t) dt &\leq \log(e+T)\int_0^T m(t) dt\\
 & \leq \log(e+T)\int_X f(x) d\nu \leq \log (e+T). 
 \end{split}
\end{equation*}
By the Chebychev inequality we have
\begin{equation*}
\begin{split}
\int_T^{\infty} \log (e + t) m(t) dt &\leq \Vert f \Vert_p^p \int_T^{\infty} \frac{\log (e + t)}{t^p} dt\\
& < \frac{\Vert f \Vert_p^p}{(p-1)^2 T^{p-1}} ( (p-1)\log(e+T) + 1). 
\end{split}
\end{equation*}
Then, by  choosing  $T = \Vert f \Vert_p^{\frac{p}{p-1}}$ and substituting what have obtained into (\ref{applyF}), it follows from $\Vert f \Vert_{p} \geq 2$ that
$$
\int_X f(x) \log(e + f(x)) d\nu \leq \frac{C}{(p - 1)^2} \log \Vert f \Vert_p.
$$
\end{proof}

\begin{claim}\label{notin}
For any $p > 1$, $\tilde f$ is not in $L^p([0, 2\pi))$. 
\end{claim}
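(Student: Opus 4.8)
The plan is to exploit that $\tilde f$ is a sum of nonnegative terms, so that for each fixed $n$ one has the pointwise domination $\tilde f \geq 2^{-n} P_{N_n}/\Vert P_{N_n}\Vert_{L\log L}$ and hence
$$
\Vert \tilde f\Vert_p \;\geq\; 2^{-n}\,\frac{\Vert P_{N_n}\Vert_p}{\Vert P_{N_n}\Vert_{L\log L}}.
$$
It then suffices to show that the right-hand side tends to $\infty$ as $n\to\infty$, for then $\Vert\tilde f\Vert_p=\infty$. This reduces the claim to two estimates tied to the controlled quantity $\Vert P_{N_n}\Vert_{p_n}\geq 4^n$ from (\ref{4}): a lower bound for the numerator $\Vert P_{N_n}\Vert_p$ and an upper bound for the denominator $\Vert P_{N_n}\Vert_{L\log L}$.

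First I would fix $p>1$ and restrict to $n$ so large that $p_n=1+1/n\leq p$. Passing to the normalized (probability) measure $d\nu=dx/2\pi$, the $L^r(\nu)$ norms are nondecreasing in $r$, so $\Vert P_{N_n}\Vert_{L^{p_n}(\nu)}\leq\Vert P_{N_n}\Vert_{L^{p}(\nu)}$. Translating back to Lebesgue measure via $\Vert g\Vert_{L^r(\nu)}=(2\pi)^{-1/r}\Vert g\Vert_{L^r(dx)}$ and absorbing the bounded factors $(2\pi)^{\pm 1/r}$ gives $\Vert P_{N_n}\Vert_p\gtrsim \Vert P_{N_n}\Vert_{p_n}\geq 4^n$.

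Next I would control the denominator by applying Proposition \ref{p-1} to $f=P_{N_n}$ with the probability measure $d\nu$, so that $\Vert f\Vert_1=1$, and $\Vert f\Vert_{p_n}\geq 2$ holds for large $n$ since $\Vert P_{N_n}\Vert_{p_n}\geq 4^n$ and $(2\pi)^{-1/p_n}$ stays bounded away from $0$. Applying the proposition \emph{at the exponent $p_n$} yields
$$
\Vert P_{N_n}\Vert_{L\log L}\;\lesssim\;\frac{1}{(p_n-1)^2}\,\log\Vert P_{N_n}\Vert_{p_n}\;=\;n^2\,\log\Vert P_{N_n}\Vert_{p_n},
$$
the crucial point being that the blow-up factor $1/(p-1)^2$ is merely polynomial, namely $n^2$, at $p=p_n$, because $p_n-1=1/n$.

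Combining the two estimates and writing $A_n=\Vert P_{N_n}\Vert_{p_n}\geq 4^n$, the lower bound becomes $\Vert\tilde f\Vert_p\gtrsim 2^{-n}A_n/(n^2\log A_n)$. The final maneuver is to note that $A\mapsto A/\log A$ is increasing for $A>e$, so that regardless of the true size of $A_n$ (we only know $A_n\geq 4^n$) we may replace $A_n$ by $4^n$ to conclude $\Vert\tilde f\Vert_p\gtrsim 2^{-n}\,4^n/n^3=2^n/n^3\to\infty$. The main obstacle I anticipate is the bookkeeping around Proposition \ref{p-1}: one must invoke it at the moving exponent $p_n$ (not at the fixed $p$) to keep the $(p-1)^{-2}$ factor polynomial in $n$, while bounding $\Vert P_{N_n}\Vert_p$ from below by the \emph{same} $L^{p_n}$ quantity; reconciling these two exponents through probability-measure monotonicity, and verifying that the various $(2\pi)^{\pm 1/r}$ normalization constants remain harmlessly bounded, is the delicate bit. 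The observation that $A/\log A$ is monotone is what makes the argument robust to the unknown magnitude of $\Vert P_{N_n}\Vert_{p_n}$.
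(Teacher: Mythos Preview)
Your proposal is correct and follows essentially the same route as the paper: drop all but the $n$th summand using nonnegativity, invoke Proposition~\ref{p-1} at the moving exponent $p_n$ to control $\Vert P_{N_n}\Vert_{L\log L}$ by $n^2\log\Vert P_{N_n}\Vert_{p_n}$, and use the monotonicity of $A/\log A$ together with $A_n\geq 4^n$ to conclude. The only cosmetic difference is where the finite-measure monotonicity of $L^r$ norms is applied: the paper first shows $\Vert\tilde f\Vert_{p_n}>(3/2)^n$ and then derives a contradiction from $\Vert\tilde f\Vert_{p_n}\leq C\Vert\tilde f\Vert_p$ for $p_n<p$, whereas you apply the monotonicity to $P_{N_n}$ instead and bound $\Vert\tilde f\Vert_p$ directly.
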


\begin{proof}
It follows from Proposition \ref{p-1} and (\ref{4}) that
\begin{equation*}
\begin{split}
\Vert \tilde f  \Vert_{p_n} & > \frac{\Vert  P_{N_n} \Vert_{p_n}}{2^n\Vert P_{N_n} \Vert_{L\log L}} \geq 
C^{-1}(p_n - 1)^2 \frac{\Vert  P_{N_n} \Vert_{p_n}}{2^n\log (\Vert P_{N_n} \Vert_{p_n} )} > \left(\frac{3}{2}\right)^{n}
\end{split}
\end{equation*}
as $n$ is sufficiently large.  

Assuming $\tilde f \in L^p([0, 2\pi))$ for some $p > 1$ with $\Vert \tilde f \Vert_{p}$ being a constant $C_1$. Then, 
$\tilde f \in L^q([0, 2\pi))$ for any $1 < q < p$ and $\Vert \tilde f \Vert_{q} \leq C_1$. However, taking some integer $n$ so that $\left(\frac{3}{2}\right)^n > C_1$ and $p_n < p$, we then have $\Vert \tilde f \Vert_{p_n} > C_1$. This leads to a contradiction. 
\end{proof}

\begin{claim}\label{LlogL}
It holds that
$$
\tilde f(x) \log (e+\tilde f(x)) \leq \sum_{n \geq 1} 2^{-n} \frac{P_{N_n}(x)\log (e+ P_{N_n}(x))}{\Vert P_{N_n} \Vert_{L \log L}}, 
$$
and moreover  $\Vert \tilde f \Vert_{L \log L} \leq 1$. 
\end{claim}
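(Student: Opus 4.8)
The plan is to reduce the claimed pointwise inequality to a single application of Jensen's inequality, and then to obtain the $L\log L$ bound by integrating term by term. Write $\psi(t) = t\log(e+t)$ for $t \ge 0$; a direct computation of $\psi''(t) = \frac{1}{e+t} + \frac{e}{(e+t)^2} > 0$ shows $\psi$ is convex and increasing with $\psi(0)=0$. Set $a_n = 2^{-n}$ and $g_n = P_{N_n}/\Vert P_{N_n}\Vert_{L\log L}$, so that $\tilde f = \sum_{n\ge1} a_n g_n$ with $a_n \ge 0$, $g_n \ge 0$, and $\sum_{n\ge1} a_n = 1$.

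First I would establish the convexity estimate
\[
\psi(\tilde f(x)) \le \sum_{n\ge 1} a_n \psi(g_n(x)).
\]
Because the partial sums of the $a_n$ do not reach $1$, the cleanest route is to apply finite Jensen to the convex combination of $g_1(x),\dots,g_N(x)$ together with the value $0$, carrying the leftover weight $1-\sum_{n\le N} a_n$; since $\psi(0)=0$ the extra term drops out, giving $\psi\big(\sum_{n\le N} a_n g_n(x)\big) \le \sum_{n\le N} a_n \psi(g_n(x))$. Letting $N\to\infty$, the left side increases to $\psi(\tilde f(x))$ by continuity and monotonicity of $\psi$, while the right side increases to $\sum_{n\ge1} a_n \psi(g_n(x))$; monotone convergence then yields the displayed inequality in $[0,\infty]$, with no a priori finiteness of $\tilde f(x)$ needed.

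The remaining point is to replace the normalized argument produced by Jensen with the unnormalized one appearing in the statement. Here the key observation is that $\Vert P_{N_n}\Vert_{L\log L} = \int_0^{2\pi} P_{N_n}\log(e+P_{N_n})\,dx \ge \int_0^{2\pi} P_{N_n}\,dx = 2\pi \ge 1$, using $\log(e+P_{N_n}) \ge 1$ together with $P_{N_n}\ge 0$ of total mass $2\pi$. Writing $c_n = \Vert P_{N_n}\Vert_{L\log L} \ge 1$, one then has $\psi(g_n) = (P_{N_n}/c_n)\log(e + P_{N_n}/c_n) \le (P_{N_n}/c_n)\log(e+P_{N_n}) = \psi(P_{N_n})/c_n$, since $P_{N_n}/c_n \le P_{N_n}$ and $\log(e+\cdot)$ is increasing. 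Substituting this into the convexity estimate gives exactly the claimed pointwise bound.

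Finally I would integrate over $[0,2\pi)$. As all terms are non-negative, Tonelli's theorem justifies interchanging sum and integral, and each term contributes $2^{-n}\,\Vert P_{N_n}\Vert_{L\log L}^{-1}\int_0^{2\pi} P_{N_n}\log(e+P_{N_n})\,dx = 2^{-n}$, so that $\Vert \tilde f\Vert_{L\log L} \le \sum_{n\ge1} 2^{-n} = 1$; as a by-product this shows $\tilde f$ is finite almost everywhere, retroactively justifying the extended-valued step above. I expect the only genuine subtlety to be the passage from the normalized to the unnormalized logarithm, which is precisely where the lower bound $\Vert P_{N_n}\Vert_{L\log L} \ge 1$ enters; the convexity input and the term-by-term integration are routine.
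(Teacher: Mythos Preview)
Your proof is correct and follows essentially the same approach as the paper: apply Jensen's inequality for the convex function $\psi(t)=t\log(e+t)$ to the convex combination $\tilde f=\sum 2^{-n}g_n$, then integrate term by term. Your treatment is in fact more careful than the paper's on two points: you justify the infinite Jensen step via finite Jensen plus monotone limits, and you explicitly supply the bound $\Vert P_{N_n}\Vert_{L\log L}\ge 2\pi\ge 1$ needed to pass from $\log(e+P_{N_n}/\Vert P_{N_n}\Vert_{L\log L})$ to $\log(e+P_{N_n})$, a step the paper takes without comment.
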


\begin{proof}
Since $\psi(t) = t\log (e+ t)$ is convex on  $\mathbb R^+$, we have 
\begin{equation*}
\begin{split}
\tilde f(x) \log (e+\tilde f(x)) & = \psi\circ \tilde f(x) = \psi \left(\frac{\sum_{n \geq 1}2^{-n}\frac{P_{N_n}(x)}{\Vert P_{N_n} \Vert_{L \log L}}}{\sum_{n\geq 1}2^{-n}}  \right)\\
&\leq \sum_{n \geq 1} 2^{-n} \frac{1}{\Vert P_{N_n} \Vert_{L \log L}} P_{N_n}(x) \log \left(e + \left(\frac{P_{N_n}(x)}{\Vert P_{N_n} \Vert_{L \log L}}  \right)\right). 
\end{split}
\end{equation*}
Then,  
$$
\tilde f(x) \log (e +\tilde f(x)) \leq \sum_{n \geq 1} 2^{-n} \frac{P_{N_n}(x) \log (e +P_{N_n}(x))}{\Vert P_{N_n}\Vert_{L \log L}}
$$
and thus $\Vert \tilde f \Vert_{L \log L} \leq 1$.
\end{proof}

The Hardy-Littlewood maximal function $M\nu$ of the signed measure $\nu$ is defined as 
$$M\nu(x) = \sup_{x \in I}\frac{1}{|I|}|\nu|(I),$$
where the supremum is taken over all bounded intervals. In particular, for the signed measure of the form $ d\nu(x) = g(x)dx$, $M\nu$ is usually denoted by $Mg$ and called the Hardy-Littlewood maximal function of the function $g$ in the literature. This is a quantitation of the Lebesgue theorem which says that if $g(x)$ is locally integrable on $\mathbb R$ then
$$
\lim_{h, k \to 0^+} \frac{1}{h+k} \int_{x - h}^{x+k} g(t) dt = g(x)
$$
for almost every $x \in \mathbb R$. If $g \in L^p(\mathbb R)$ for $p \in [1, \infty]$, then $Mg(x)$ is finite almost everywhere. 
Moreover, $Mg \in L^p(\mathbb R)$ if $g \in L^p(\mathbb R)$ for $p \in (1, \infty]$, while 
 $Mg$ is  weak $L^1$ if $g \in L^1(\mathbb R)$ (see \cite[Page 23]{Ga}). On the other hand, Stein \cite{St} and Zygmund \cite{Zy} proved that if $g$ is supported on a finite interval $I$  then $Mg \in L^1(I)$ if and only if $g \in L\log L(I)$. 

\begin{proof}[Proof of Theorem 1]
Recall that 
$$
\omega(x) = M\tilde f (x), \qquad x \in [0, 2\pi).
$$
Then, we have  that $\omega$ is finite almost everywhere, $\omega \in L^1([0, 2\pi))$  by Claim \ref{LlogL}, and   $\omega \notin L^p([0, 2\pi))$ for any $p > 1$ by Claim \ref{notin} since it holds that $\omega(x) \geq \tilde f(x)$ from the Lebesgue theorem, and thus $\omega$ is not an $A_{\infty}$-weight. Indeed, if  $\omega$ were an $A_{\infty}$-weight, then the reverse H\"older inequality holds for $\omega$ (see \cite{Ga}), namely, there are $\delta > 0$ and $C > 0$ such that
$$
\left( \frac{1}{|I|} \int_I \omega (x)^{1 + \delta} dx\right)^{1/(1+\delta)} \leq \frac{C}{|I|}\int_I \omega (x) dx
$$
for any interval $I \subset [0, 2\pi)$. 
This contradicts that $\omega \notin L^p([0, 2\pi))$ for any $p > 1$.

Furthermore, we note that the claim $\log \omega \in \rm BMO([0, 2\pi))$ follows from a result by Coifman and Rochberg \cite{CR}: assuming $\nu$ is a locally finite signed  Borel measure on $\mathbb R$ for which the maximal function $M\nu(x)$ is finite almost everywhere we have $\log M\nu \in \rm BMO(\mathbb R)$.  Thus,  by taking $\nu(x) = \tilde f(x)dx$ we get the claim $\log \omega = \log M\nu \in \rm BMO([0, 2\pi))$. 

It remains to show that $\omega$ is a doubling weight on $[0, 2\pi)$. It is sufficient to check the doubling condition
$\int_I \omega (x) dx \asymp \int_{I^*}\omega (x) dx$ holds when $I$ and $I^*$ are two adjacent intervals of length $\frac{2\pi}{3^n}$. 

We cut the sum giving $\tilde f$ into two parts, each term in the first part with the subscript $N_k$ such that $N_k \leq n-1$. Then we can write
$$
\tilde f(x) = g_n(x) + f_n(x)h_n(x).
$$
Here, we split the second part into the product of $f_n$ and $h_n$, $f_n$ with the subscript $N_k$'s such that $N_k \leq n-1$, and $h_n$ being $\frac{2\pi}{3^n}$ periodic. Then, 
$$
\exp \left(-\frac{\pi\epsilon}{1 - \epsilon}\right) f_n(x) \leq f_n(x + \frac{2\pi}{3^n}) \leq \exp \left(\frac{\pi\epsilon}{1 - \epsilon}\right) f_n(x)
$$
and
$$
\exp \left(-\frac{\pi\epsilon}{1 - \epsilon}\right) g_n(x) \leq g_n(x + \frac{2\pi}{3^n}) \leq \exp \left(\frac{\pi\epsilon}{1 - \epsilon}\right) g_n(x).
$$
It follows that
\begin{equation}\label{tilde}
\exp \left(-\frac{\pi\epsilon}{1 - \epsilon}\right) \tilde f(x) \leq \tilde f(x + \frac{2\pi}{3^n}) \leq \exp \left(\frac{\pi\epsilon}{1 - \epsilon}\right) \tilde f(x). 
\end{equation}
Indeed, for any $k \leq n-1$, we write $P_k(x) = \prod_{j=1}^{k}(1 + \epsilon \cos(3^jx)) = \prod_{j=1}^{k}\varphi_j(x)$ so that $\log P_k(x) = \sum_{j=1}^{k}\log \varphi_j(x)$, and then by using the finite increment theorem we get 
\begin{equation*}
\begin{split}
|\log P_k(x + \frac{2\pi}{3^n}) - \log P_k(x)| &\leq \sum_{j=1}^{k} |\log\varphi_j(x + \frac{2\pi}{3^n}) - \log \varphi_j(x)|\\
& \leq \sum_{j=1}^{k}\Vert (\log \varphi_j(x))' \Vert_{\infty} \frac{2\pi}{3^n}\\
& \leq \frac{2\pi\epsilon}{1 - \epsilon} \sum_{j=1}^{k} 3^{j-n} \leq \frac{\pi\epsilon}{1-\epsilon}, 
\end{split}
\end{equation*}
which implies 
$$
\exp \left(-\frac{\pi\epsilon}{1 - \epsilon}\right) P_k(x) \leq P_k(x + \frac{2\pi}{3^n}) \leq \exp \left(\frac{\pi\epsilon}{1 - \epsilon}\right) P_k(x).
$$

For any $x \in [0, 2\pi)$ and any interval $I = [a, b]$ with $a \leq x \leq b$, set $J = [a + \frac{2\pi}{3^n}, b + \frac{2\pi}{3^n}]$ so that $x + \frac{2\pi}{3^n} \in J$. This gives
$$
\int_J \tilde f(t)dt = \int_I \tilde f(t + \frac{2\pi}{3^n}) dt. 
$$
Combined with (\ref{tilde}), it implies
$$
\exp \left(-\frac{\pi\epsilon}{1 - \epsilon}\right)\left(\frac{1}{|I|} \int_I \tilde f(t)dt \right) \leq \frac{1}{|J|} \int_J \tilde f(t)dt \leq \exp \left(\frac{\pi\epsilon}{1 - \epsilon}\right)\left(\frac{1}{|I|} \int_I \tilde f(t)dt \right), 
$$
and then by taking the supremum over the interval $I$ containing $x$ we pass (\ref{tilde}) to the maximal function $\omega (x)$, namely, 
\begin{equation}\label{omega}
\exp \left(-\frac{\pi\epsilon}{1 - \epsilon}\right) \omega(x) \leq \omega(x + \frac{2\pi}{3^n}) \leq \exp \left(\frac{\pi\epsilon}{1 - \epsilon}\right) \omega(x). 
\end{equation}
For any two adjacent intervals $I$ and $I^*$ of length $\frac{2\pi}{3^n}$, we have
$$\int_{I^*}\omega (x) dx = \int_I \omega(x + \frac{2\pi}{3^n}) dx.$$
By combining this with (\ref{omega}) we conclude that 
\begin{equation}\label{constant}
\exp \left(-\frac{\pi\epsilon}{1 - \epsilon}\right)\int_{I}\omega (x) dx \leq \int_{I^*}\omega (x) dx \leq \exp \left(\frac{\pi\epsilon}{1 - \epsilon}\right)\int_{I}\omega (x) dx. 
\end{equation}
This completes the proof of the doubling condition. 
\end{proof}

\begin{remark}
We see from the above arguments that 
\begin{equation*}
\exp \left(-\frac{\pi\epsilon t}{1 - \epsilon}\right)\int_{I}\omega^t (x) dx \leq \int_{I^*}\omega^t (x) dx \leq \exp \left(\frac{\pi\epsilon t}{1 - \epsilon}\right)\int_{I}\omega^t (x) dx, 
\end{equation*}
which implies $\omega^t$ is also a doubling weight for any $0 \leq t \leq 1$. Since $\epsilon\in (0,1)$ may be arbitrarily small, we may moreover assume that the doubling constant of the weight $\omega^t$ is as close to $1$ as we like. 
\end{remark}

\section{Proof of Corollary 2}
A locally integrable function $\omega \geq 0$ on the real line $\mathbb R$ is called an $A_p$-weight for $1 < p < \infty$ if 
$$
\sup_I \left( \frac{1}{|I|} \int_I \omega (x) dx \right) \left( \frac{1}{|I|} \int_I \left(\frac{1}{\omega(x)}\right)^{\frac{1}{p-1}} dx\right)^{p-1} < \infty, 
$$
where the supremum is taken over all bounded intervals. It is known that $A_p \subset A_q$ if $p < q$ and $A_{\infty} = \cup_{p > 1}A_p$. A locally integrable function $\omega \geq 0$ on the real line $\mathbb R$ is called an $A_1$-weight, if there is a constant $C > 0$ such that for all bounded intervals $I$
$$
\omega_I \leq C\inf_I\omega, 
$$
or equivalently, there is a constant $C > 0$ such that 
$$
M\omega(x) \leq C\omega(x)
$$
almost everywhere on $\mathbb R$. If $\omega(x)$ satisfies $A_1$, then $\omega(x)$ satisfies $A_p$ for any $p > 1$. The following result by Coifman and Rochberg \cite{CR} establishes the relationship between $A_1$-weights and Hardy-Littlewood maximal functions.
\begin{proposition}\label{CoRo}
If $\nu$ is a locally finite signed Borel measure with $M\nu(x) < \infty$ almost everywhere, then $(M\nu)^t$ is an $A_1$-weight for any $0 \leq t < 1$.
\end{proposition}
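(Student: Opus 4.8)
The plan is to establish the averaged $A_1$ inequality directly, following the original argument of Coifman and Rochberg. First I would reduce to the case of a nonnegative measure: since $M\nu(x)=\sup_{x\in I}\frac1{|I|}|\nu|(I)=M|\nu|(x)$ by the very definition of $M\nu$, I may replace $\nu$ by the positive measure $|\nu|$ and assume $\nu\ge 0$. The case $t=0$ is trivial since $(M\nu)^0\equiv 1$, so I fix $0<t<1$. The goal is then to show that for every bounded interval $I$,
\begin{equation*}
\frac{1}{|I|}\int_I (M\nu)^t\,dx\le C_t\,\inf_{x\in I}\,(M\nu(x))^t,
\end{equation*}
which is exactly the $A_1$ condition with a constant $C_t$ depending only on $t$.

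Second, I would freeze an interval $I$ and perform a Calder\'on--Zygmund-type splitting of the measure relative to the doubled interval $2I$ (same centre, twice the length): write $\nu=\nu_1+\nu_2$ with $\nu_1=\nu|_{2I}$ and $\nu_2=\nu|_{\mathbb R\setminus 2I}$. Since $0<t\le 1$, subadditivity of $s\mapsto s^t$ gives $(M\nu)^t\le (M\nu_1)^t+(M\nu_2)^t$, so it suffices to bound the two averages separately. For the local piece I would use the weak-type $(1,1)$ bound for the Hardy--Littlewood maximal operator applied to the finite measure $\nu_1$, namely $|\{M\nu_1>\lambda\}|\le C\nu(2I)/\lambda$, together with the layer-cake formula of Lemma~\ref{Fubini}. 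Integrating $|\{x\in I:M\nu_1>\lambda\}|\le\min\{|I|,\,C\nu(2I)/\lambda\}$ against $t\lambda^{t-1}\,d\lambda$ and splitting at $\lambda_0=\nu(2I)/|I|$ yields, because $t<1$ makes the tail integral converge,
\begin{equation*}
\frac{1}{|I|}\int_I (M\nu_1)^t\,dx\le C_t\Big(\frac{\nu(2I)}{|I|}\Big)^t=C_t\Big(2\,\frac{\nu(2I)}{|2I|}\Big)^t\le C_t\,2^t\,(M\nu(x))^t
\end{equation*}
for every $x\in I$, since $2I\ni x$ forces $\nu(2I)/|2I|\le M\nu(x)$.

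Third, for the far piece I would show that $M\nu_2$ is, up to a constant, comparable across all of $I$ and dominated by $M\nu$ there. The key geometric observation is that any interval $J$ containing a point $y\in I$ and carrying mass of $\nu_2$ must meet $\mathbb R\setminus 2I$, hence has length $|J|\ge|I|/2$; consequently, for any other point $x\in I$ the smallest interval $\hat J\supset J\cup\{x\}$ satisfies $|\hat J|\le 3|J|$ and $\nu_2(\hat J)\ge\nu_2(J)$, giving $\nu_2(J)/|J|\le 3\,\nu(\hat J)/|\hat J|\le 3\,M\nu(x)$, where I use $M\nu\ge M\nu_2$. Taking the supremum over such $J$ produces $M\nu_2(y)\le 3\,M\nu(x)$ for all $x,y\in I$, whence $\frac1{|I|}\int_I(M\nu_2)^t\,dy\le 3^t\inf_{x\in I}(M\nu(x))^t$. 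Adding the two estimates, and noting that the local bound holds for every $x\in I$ and so at the infimum, gives the displayed $A_1$ inequality and completes the proof.

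The main obstacle I anticipate is the far-field step: one must make precise the claim that truncating $\nu$ outside $2I$ renders $M\nu_2$ essentially constant on $I$ and comparable to $M\nu$, which rests entirely on the one-dimensional geometry of intervals just sketched and on the monotonicity $M\nu\ge M\nu_2$. The local step is comparatively routine once the weak-type $(1,1)$ estimate and Lemma~\ref{Fubini} are in hand, and it is here that the hypothesis $t<1$ is essential, both for the convergence of $\int_{\lambda_0}^\infty\lambda^{t-2}\,d\lambda$ and for the subadditivity $(a+b)^t\le a^t+b^t$.
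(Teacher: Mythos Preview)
Your argument is correct and is essentially the original Coifman--Rochberg proof: splitting $\nu$ into its restriction to $2I$ and the complement, handling the near part by the weak-$(1,1)$ maximal inequality together with the layer-cake formula (this is exactly where $t<1$ enters), and handling the far part by the elementary interval geometry you describe. Note, however, that the paper does not give its own proof of this proposition; it is stated and attributed to Coifman and Rochberg \cite{CR} and then simply invoked in the proof of Corollary~2. So there is no ``paper's proof'' to compare with---you have supplied precisely the argument that the paper outsources to the reference.
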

As was observed in \cite{CR}, this construction yields essentially all the elements of $A_1$-weights and in fact essentially all of $A_1$-weights are obtained using only signed measures of the form $g(x)dx$. 

\begin{proof}[Proof of Corollary 2]
We come back to our constructions of the function $\tilde f$ and the weight function $\omega = M\tilde f$ in section 2. 
Set the measure $\nu(x) = \tilde f(x)dx$ so that $M\nu = \omega$.  
We conclude by Proposition \ref{CoRo}  that $|h_t'| = \omega^t$ is an $A_1$-weight, and thus $A_{\infty}$-weight for any $0 \leq t < 1$. On the other hand, it follows from Theorem 1 that $|h'| = \omega$ is not an $A_{\infty}$-weight.  Moreover, $|h_t'| = \omega^t$ is a doubling weight for any $0 \leq t \leq 1$. 

It is observed that
$$
\Vert \log|h_t'| \Vert_* = \Vert t\log|h'| \Vert_* \leq \Vert \log|h'| \Vert_* \leq C, 
$$
where $C > 0$ is some constant. 
By the equivalences of $(1)$, $(2)$ and $(3)$ in section 1,  we have $\log\Phi'_t \in \rm BMOA(\mathbb D)$ and $\log\Psi'_t \in \rm BMOA(\mathbb D^*)$ for any $0 \leq t < 1$, but $\log\Phi'_1\notin \rm BMOA(\mathbb D)$ and $\log\Psi'_1 \notin \rm BMOA(\mathbb D^*)$. 

It remains to show that $\Vert \log\Phi_t'\Vert_* \to \infty$ as $t \to 1$.   We suppose that there exists a subsequence $\{t_n\}$ converging to $1$ such that $\Vert \log\Phi_{t_n}'\Vert_* $ is bounded and we argue toward a contradiction.  Since $\mathrm {BMOA}(\mathbb D) = H^2 \cap \mathrm{BMO}(\mathbb S)$ is the dual of the classical space $H^1$, the sequence $\{ \log\Phi_{t_n}' \}$ has a weak-star convergent subsequence $\{ \log\Phi_{t_{n_k}}' \}$ converging to some function $\varphi$ in BMOA space in the following sense:
$$
\frac{1}{2\pi} \int_0^{2\pi} f(e^{i\theta})\overline{\log\Phi_{t_{n_k}}'(e^{i\theta})} d\theta \to  \frac{1}{2\pi} \int_0^{2\pi} f(e^{i\theta})\overline{\varphi(e^{i\theta})} d\theta
$$
as $k\to\infty$ 
for any $f \in H^1$. In particular, by taking $f \equiv 1$ we get $\log\Phi_{t_{n_k}}' \to \varphi$ almost everywhere on $\mathbb S$, and then by taking the Poisson integral we have 
$\log\Phi_{t_{n_k}}' \to \varphi$ almost everywhere on $\mathbb D$. On the other hand,  this subsequence $\{ \log\Phi_{t_{n_k}}' \}$ converges in the universal Teichm\"uller space in $\mathbb D$ to $\log\Phi_1'$, namely, $\Vert \log\Phi_{t_{n_k}}' - \log\Phi_1' \Vert_{\mathcal B} \to 0$. Thus, we conclude that $\log\Phi_1'  = \varphi \in \rm BMOA(\mathbb D)$. This leads to a contradiction. 
\end{proof}

\begin{question}
Let $\mathcal{C}=\{\log |h'|,\,|h'| \in A_\infty(\mathbb{S})\}$. It is an open convex subset of the real Banach space ${\rm BMO}_{\mathbb R}( \mathbb{S})$, the space of all real-valued BMO functions on $\mathbb S$. A paraphrase of our results (Theorem 1 and Corollary 2) is that there exists a quasisymmetric homeomorphism $h$ of $\mathbb S$ which is absolutely continuous with $\log\vert h'\vert\in \mathrm{BMO}_{\mathbb R}(\mathbb{S})$,   and moreover  
$\log\vert h'\vert\in \bar{\mathcal{C}}\setminus\mathcal C$, the boundary of $\mathcal C$ for the BMO topology.  We thus address the question:\\
Does there exist a quasisymmetric homeomorphism of $\mathbb S$ which is absolutely continuous with $\log\vert h'\vert\in \mathrm{BMO}_{\mathbb R}(\mathbb{S})$ such that $\log\vert h'\vert\notin \bar{\mathcal{C}}$?
\end{question}

\section{Proof of Theorem 3}
\begin{proof}[Proof of Theorem 3]
Recall that $|h'(e^{i\theta})| = \omega(\theta)$. We use $\omega(\theta)$ to denote $|h'(e^{i\theta})|$ for simplicity.  Here, $\omega$ is the weight function constructed in section 2. 
Since $\log \omega \in \rm BMO(\mathbb S)$, it is in particular integrable on the unit circle $\mathbb S$. If $z = re^{i\varphi}$, then the Poisson integral of $\log \omega$, 
$$
u(z) = \frac{1}{2\pi}\int_0^{2\pi}P_r(\varphi - \theta)\log \omega(\theta) d\theta = P_r\ast(\log \omega)(\varphi), 
$$
is harmonic on $\mathbb D$.  We let
$$
\log\hat\Phi' = u(z) + iv(z),  
$$
where $v(z)$ is the harmonic conjugate function of $u(z)$, normalized so that $v(0) = 0$. By $\log \omega \in \rm BMO(\mathbb S)$ again, we have that $v(z)$ has 
 nontangential limit almost everywhere on $\mathbb S$ which we denote by $b(\theta)$ , $b \in \rm BMO(\mathbb S)$, and thus $\log\hat\Phi' \in \rm BMOA(\mathbb D)$.  By the univalence criterion of Ahlfors-Weill \cite{AW}, $\hat\Phi$ is a conformal map onto a quasidisk and we call $\hat{\Gamma}$ its boundary whenever the doubling constant of $\omega$ is sufficiently small. By the Jensen inequality we have 
 \begin{equation*}
 \begin{split}
 |\hat\Phi'(z)| & = \exp(u(z)) \\
 &= \exp\left( \frac{1}{2\pi}\int_0^{2\pi}P_r(\varphi - \theta)\log \omega(\theta) d\theta \right)\\
 & \leq \frac{1}{2\pi}\int_0^{2\pi}P_r(\varphi - \theta) \omega(\theta) d\theta = P_r\ast\omega (\varphi).
 \end{split} 
 \end{equation*}
Since $\omega \in L^1$ on $\mathbb S$, we conclude that $P_r\ast \omega \in L^1$ on $\mathbb S$ for any $0 \leq r < 1$ and the sequence 
$\Vert P_r\ast \omega \Vert_1$ increases to $\Vert \omega \Vert_1$ as $r \to 1$. Then, 
$$
\Vert \hat\Phi' \Vert_{H^1} = \sup_{r}\frac{1}{2\pi}\int_0^{2\pi} |\hat\Phi'(re^{i\varphi})| d\varphi < \infty,
$$
which implies that $\hat\Phi' \in H^1(\mathbb D)$, actually $\hat\Phi'$ is an outer function,  and thus $\hat{\Gamma}$ is rectifiable. Moreover, since the boundary function $\omega$ of $\vert \hat\Phi' \vert$ is not an $A_{\infty}$-weight, $\hat{\Gamma}$ is not a chord-arc curve (\cite[Theorem 7.11]{Po}\cite{Zi}).
\end{proof}

\begin{question}
For any $0 \leq t < 1$, if we replace $\omega$ with $\omega^t$, set 
$$
\hat\Phi_t(z) = \hat\Phi_t(0) + \int_0^z (\hat\Phi')^t(\zeta) d\zeta
$$
and denote the curve $\partial \hat\Phi_t(\mathbb D)$ by $\hat{\Gamma}_t$. Then, by the same arguments as the above we have that $\hat{\Gamma}_t$ is a chord-arc curve since $\omega^t$ is an $A_{\infty}$-weight. According to these observations, the conformal map $\hat\Phi$ lies in the closure of chord-arc domain maps in the sense that $\hat\Phi_t$ is a conformal map onto a chord-arc domain for any $t \in [0, 1)$.  Must every map satisfying the conclusions of Theorem 3 be such? 
\end{question}

\begin{remark} 
Let $z(s)$ denote the arc-length parametrization of the chord-arc curve. Then, the set of all ${\rm arg} z'(s)$ forms an open subset of real-valued BMO functions (see \cite{Da}). 

We see from the above Question that the rectifiable curve $\hat\Gamma$ is on the boundary of  the closure of the space of chord-arc curves. Set 
$$
z(s) = z(0) + \int_0^s e^{i\beta (x)} dx
$$
is an arc-length parametrization of $\hat\Gamma$. 
Noting that the curve $\hat\Gamma$ has a parametrization $\gamma$ such that $\gamma'(t)=\omega(t)e^{ib(t)}$, we conclude that 
$$
z'(s)=  e^{i\beta (s)} = e^{ib\circ \alpha(s)}
$$
where $\alpha(s)$ is the inverse of the function
$$
s(t) = \int_0^t |\gamma'(x)| dx = \int_0^t \omega(x) dx. 
$$
Recall that  $b \in \rm BMO(\mathbb S)$  and $\omega$ is a positive $L^1$ function, but not an $A_{\infty}$- weight.  Thus,  we can not conclude that $\beta$ is a BMO function,  nor can we conclude that $z'(s)$ is not of this form!
\end{remark}


\begin{thebibliography}{99}
\bibitem{Ah} L.V. Ahlfors, Conformal Invariants: Topics in Geometric Function Theory, AMS Chelsea Publishing, 2010.
\bibitem{AW} L.V. Ahlfors and G. Weill, A uniqueness theorem for Beltrami equation, Proc. Amer. Math. Soc. 13 (1962), 975--978. 
\bibitem{AZ} K. Astala and M. Zinsmeister, Teich\-m\"ul\-ler spaces and BMOA, Math. Ann. 289 (1991), 613--625.
\bibitem{BJ} C. Bishop and P. Jones, Harmonic measure, $L^2$ estimates and the Schwarzian derivative, J. Anal. Math. 62 (1994), 77-113. 
\bibitem{CR} R.R. Coifman and R. Rochberg, Another characterization of BMO, Proc. Amer. Math. Soc. 79 (1980), 249-254. 
\bibitem{CZ} G. Cui and M. Zinsmeister, BMO-Teichm\"uller spaces, IIIinois J. Math. 48 (2004), 1223-1233.
\bibitem{Da} G. David, Th\`ese de troisi\`eme cycle, Universit\'e de Paris XI, Orsay, France. 
\bibitem{FKP} R.A. Fefferman, C.E. Kenig and J. Pipher, The theory of weights and the Dirichlet problems for elliptic equations, Ann. of
Math. 134 (1991), 65--124.
\bibitem{FM} C. Fefferman and B. Muckenhoupt, Two nonequivalent conditions for weight functions, Proc. Amer. Math. Soc. 45 (1974), 99-104. 
\bibitem{Ga} J.B. Garnett, Bounded Analytic Functions, Academic Press, New York, 1981.
\bibitem{Jo} P.W. Jones, Homeomorphisms of the line which preserve BMO, Ark. Mat. 21 (1983), 229-231.
\bibitem{Le} O. Lehto, Univalent Functions and Teichm\"uller Spaces, 
Graduate Texts in Math. 109, Springer, 1987.
\bibitem{Na} S. Nag, The Complex Analytic Theory of Teichm\"uller Spaces, Wiley-Interscience, 1988. 
\bibitem{Po} C.  Pommerenke, Boundary Behaviour of Conformal Maps, Springer, 1992. 
\bibitem{St} E.M. Stein, Note on the class $L \log L$, studia Math. 32 (1969), 305-310. 
\bibitem{SW} Y. Shen and H. Wei, Universal Teichm\"uller space and BMO, Adv. Math.  234 (2013), 129-148. 
\bibitem{WM} H. Wei and K. Matsuzaki, Strongly symmetric homeomorphisms on the real line with uniform continuity, preprint. 
\bibitem{Zi} M. Zinsmeister, Domaines de Lavrentiev, Publi. Math. Orsay, 1985. 
\bibitem{Zy} A. Zygmund, Trigonometric Series, Vols. I and II, 2nd ed., Cambridge University Press, London, 1959. 
\end{thebibliography}
\end{document}